\newtheorem{thm}{Theorem}[section]
\newtheorem{lem}[thm]{Lemma}
\newtheorem{prop}[thm]{Proposition}
\theoremstyle{definition}
\newtheorem{defn}[thm]{Definition}
\newtheorem{exmp}[thm]{Example}
\theoremstyle{remark}
\newtheorem{rem}[thm]{Remark}
\numberwithin{equation}{section}
\newcommand{\GW}{\mathrm{GW}}
\newcommand{\Gr}{\mathrm{Gr}}
\newcommand{\Ham}{\mathrm{Ham}}
\newcommand{\Aut}{\mathrm{Aut}}
\newcommand{\Hom}{\mathrm{Hom}}
\newcommand{\smax}{\mathrm{smax}}
\newcommand{\univ}{\mathrm{univ}}
\newcommand{\Z}{\mathbb{Z}}
\newcommand{\Q}{\mathbb{Q}}
\newcommand{\R}{\mathbb{R}}
\newcommand{\C}{\mathbb{C}}
\providecommand{\abs}[1]{\lvert#1\rvert}
\begin{document}

\title{Symplectic capacities from Hamiltonian Circle Actions}
\author[T. Hwang]{Taekgyu Hwang}
\address{School of Mathematics, Korea Institute for Advanced Study, 85 Hoegiro, Dongdaemun-gu, Seoul, 130-722, Republic of Korea}
\email{hwangtaekkyu@kias.re.kr}

\author[D. Y. Suh]{Dong Youp Suh}
\address{Department of Mathematical Sciences, KAIST, 291 Daehak-ro, Yuseong-gu, Daejeon 305-701, Republic of Korea}
\email{dysuh@math.kaist.ac.kr}
\thanks{Dong Youp Suh is supported in part by Basic Science Research Program through the National Research Foundation of Korea(NRF) funded by the Ministry of Education(2013R1A1A2007780).}

\begin{abstract}
Let $M$ be a closed Fano symplectic manifold with a semifree Hamiltonian circle action with isolated maximum. We compute the Gromov width and the Hofer-Zehnder capacity of $M$ using a moment map.
\end{abstract}
\maketitle

\section{Introduction}
The celebrated Gromov's nonsqueezing theorem~\cite{Gr} says that there is a symplectic embedding of a ball $B^{2n}(r)$ of radius~$r$ into $B^2(R) \times \R^{2n-2}$ if and only if $r \leq R$. This motivates the notion of the \textbf{Gromov width} of a symplectic manifold $(M, \omega)$, defined by
\[w_G(M):= \sup \left\{\pi r^2 \mid B^{2n}(r) \text{ can be symplectically embedded into } M^{2n}\right\}.\]
Karshon and Tolman~\cite{KT}, and Lu~\cite{Lu1} independently, computed the Gromov width of the complex Grassmannian $\Gr(k,m)$ of $k$-planes in $\C^m$. It is equal to~$m$ if the cohomology class $[\omega]$ of the symplectic form is normalized to be the first Chern class. The idea of the proof by Karshon and Tolman is the following.

Consider the $S^1$~action on~$\C^m$ given by
\[t \cdot (z_1, \dots, z_m) = (e^{2\pi it}z_1, \dots, e^{2\pi it}z_k, z_{k+1}, \dots, z_m).\]
Here we are using a different action from that of ~\cite{KT} to induce the opposite action on $\Gr(k,m)$. Also, we consider $S^1$ as $\R/\Z$ instead of the unit circle in $\C$. This induces a semifree, Hamiltonian $S^1$~action on~$\Gr(k,m)$. The moment map $H\colon \Gr(k,m) \rightarrow \R$ attains its maximum exactly at the point $F_{\max}:= \C^k \times 0 \subset \C^m$. Choose a fixed component ~$F_{\smax}$ of the action so that there is no critical value between $H_{\smax}:= H(F_{\smax}) $ and $H_{\max}:= H(F_{\max})$. Karshon and Tolman showed that the preimage of the interval $(H_{\smax}, H_{\max}]$ is symplectomorphic to the ball
\[\Big\{x = (x_i)\in \C^{k(m-k)} \Bigm| \pi \sum_i \abs{x_i}^2 < H_{\max} - H_{\smax}\Big\}.\]
Since $[\omega]$ is $m$~times the generator of $H^2(\Gr(k,m),\Z)$ and all weights at $F_{\max}$ are~$-1$, the difference $H_{\max} - H_{\smax}$, which is the symplectic area of a gradient sphere from $F_{\smax}$ to $F_{\max}$, is equal to~$m$. See Lemma~\ref{lem:moment_map}. This shows that the Gromov width is at least~$m$. To check this lower bound is indeed the Gromov width, they used the fact by Gromov (Proposition~\ref{prop:upper_bound}) that a nonvanishing Gromov-Witten invariant with a point insertion gives an upper bound. By the result on the Gromov-Witten invariants of $\Gr(k,m)$ by Siebert and Tian~\cite{ST}, they concluded that $m$ is also an upper bound.

In this paper, we apply this idea to use Hamiltonian circle actions to compute the Gromov width to a broder class of symplectic manifolds. More precisely, we use Hamiltonian circle actions to find certain nonvanishing Gromov-Witten invariants, which estimate the Gromov width from above. Using a result by~Lu (Proposition~\ref{prop:Lu}), this method can also be used to estimate the Hofer-Zehnder capacity $c_{HZ}$(Definition~\ref{def:HZ}).

To state our main theorem we need some terminologies. An almost complex manifold $(M, J)$ is called \textbf{Fano} if any nonconstant $J$-holomorphic sphere has positive Chern number. By slight abuse of terminology, we say a Hamiltonian $S^1$-manifold $(M, \omega)$ is Fano if there is an $S^1$-invariant $\omega$-compatible almost complex structure $J$ such that $(M, J)$ is Fano. For example, any monotone symplectic manifold is Fano. An $S^1$~action is called \textbf{semifree} if it is free outside the fixed point set. We let $H_{\max}$, $H_{\smax}$ and $H_{\min}$ denote the largest, second largest and smallest critical values of the moment map~$H$, respectively.
\begin{thm}\label{thm:main}
Let $(M, \omega)$ be a closed Fano symplectic manifold with a semifree Hamiltonian $S^1$~action. The Gromov width and the Hofer-Zehnder capacity are estimated as
\begin{enumerate}[(a)]
\item\label{condition:a}
$w_G(M) \leq H_{\max} - H_{\min} \leq c_{HZ}(M)$.

\item\label{condition:b} Further if $F_{\max}$ is a point, then
\begin{align*}
	w_G(M) &= H_{\max} - H_{\smax}, \\
	c_{HZ}(M) &= H_{\max} - H_{\min}.
\end{align*}
\end{enumerate}
\end{thm}

Complex Grassmannians satisfy the assumptions of Theorem~\ref{thm:main}~(b). Also, the product of manifolds satisfying these assumptions again satisfies the assumptions. As an application, we compute the Gromov widths and the Hofer-Zehnder capacities of products of complex Grassmannians. We will see examples displaying the necessity of our assumptions in Section~\ref{sec:example}.

A related result was given by Lu~\cite{Lu2}. He proved that an upper bound for the Gromov width is given by $H_{\max} - H_{\min}$, under the assumption that the Hamiltonian $S^1$ action is semifree with isolated fixed points. Since this assumption implies Fano condition our estimate is sharper, while he also related other (pseudo) capacities. See Theorem~6.8 in \cite{Lu2}.

The proof of Theorem~\ref{thm:main} heavily uses results by McDuff and Tolman~\cite{McT}. They developed techniques to compute the Seidel representation from the data of the Hamiltonian circle action. The Seidel representation is a group homomorphism
\[S\colon \pi_1(\Ham(M, \omega)) \rightarrow QH^0(M; \Lambda)^{\times}\]
from the fundamental group of the Hamiltonian diffeomorphism group to the group of units of degree zero in the small quantum cohomology ring. Since the quantum multiplication is defined using Gromov-Witten invariants, we may compute some Gromov-Witten invariants once we know this homomorphism.  Although the Seidel representation is hard to compute in general, McDuff and Tolman obtained considerable information on it when the Hamiltonian loop is given by a Hamiltonian circle action. As an application, they gave a presentation of the quantum cohomology ring of a toric manifold. Gonzalez~\cite{Go} used their techniques to compute the quantum cohomology ring of symplectic manifolds admitting semifree circle action with isolated fixed points.

Suppose the action is semifree near the maximum fixed component $F_{\max}$. In this case McDuff and Tolman proved that the coefficient of the first term of the Seidel element is $[F_{\max}]$. See Theorem~\ref{thm:Seidel_element}. They also showed in~\cite{McT} (as a corollary of Theorem~1.10~(iii)) that all other terms vanish when $(M, \omega)$ is Fano and $\mathrm{codim}\, F_{\max} = 2$. The following theorem, which we use to prove Theorem~\ref{thm:main}, states that the codimension assumption is not necessary if the action is semifree on the whole manifold.
\begin{thm}\label{thm:main2}
Let $\phi$ be a semifree Hamiltonian $S^1$ action on a closed Fano symplectic manifold~$M$. Then
\[
	S(\phi) = [F_{\max}] \otimes q^{m(F_{\max})} t^{-H_{\max}},
\]
where $m(F_{\max})$ is the sum of weights at~$F_{\max}$.
\end{thm}

In Section~\ref{sec:Seidel}, we introduce results by McDuff and Tolman on the Seidel representation. We prove Theorem~\ref{thm:main2} in Section~\ref{sec:proof} and use it to prove Theorem~\ref{thm:main}. In Section~\ref{sec:example}, we compute the Gromov width and the Hofer-Zehnder capacity of the product of complex Grassmannians. We also give examples showing that we cannot remove our assumptions.

\subsection*{Acknowledgements}
We thank Kaoru Ono who suggested that a similar result holds for the Hofer-Zehnder capacity, Guangcun Lu for helpful comments, and the anonymous referee who strengthened Theorem~\ref{thm:main} and Theorem~\ref{thm:main2} in earlier version. We thank another referee who read the paper very carefully and gave us valuable suggestions. His or her suggestions greatly improved expositions in our paper.

\section{Hamiltonian circle action and Seidel representation}\label{sec:Seidel}
The Seidel representation was first introduced by Seidel~\cite{S} and later defined by McDuff~\cite{Mc} in the form described in this section. A Hamiltonian circle action represents an element in $\pi_1(\Ham(M,\omega))$. McDuff and Tolman~\cite{McT} gave a method to compute the Seidel representation for such elements. Their results are essential tools to the proof of our theorems.

\subsection{Hamiltonian circle action}
Let $(M, \omega)$ be a closed symplectic manifold. An $S^1$ action on $(M, \omega)$ is called \textbf{Hamiltonian} if there exists a map
\[H\colon M \rightarrow \R\]
satisfying $\omega(X, \cdot) = -dH$, where $X$ is the fundamental vector field of the action. Here we use the convention that the vector field $X$ generates the flow $\phi_t$ of period one. Such a map $H$ is called a \textbf{moment map}. The critical point set of $H$ is exactly the fixed point set of the action, which is a finite disjoint union of symplectic submanifolds. The moment map $H$ is a Morse-Bott function with critical points of even index, so the level sets of $H$ are connected. We denote by $F_{\max}$ the unique maximum fixed component with respect to $H$.

Choose an $S^1$-invariant $\omega$-compatible almost complex structure $J$, so that $g(\cdot,\cdot):= \omega(\cdot, J\cdot)$ is an $S^1$-invariant Riemannian metric. Then $-JX$ is the upward gradient vector field of $H$ with respect to $g$. Since $g$ is $S^1$-invariant, the flow $\gamma_s$ of $-JX$ commutes with $\phi_t$. For any point $w \in M \setminus M^{S^1}$, choose the gradient flow through~$w$. By rotating it by the circle action, we have a $J$-holomorphic curve
\[u\colon \R \times S^1 \rightarrow M: (s,t) \mapsto \phi_t(\gamma_s(w))\]
with the product complex structure on $\R \times S^1$. The closure of the image of~$u$ is called a \textbf{gradient sphere}.
  
The action gives weights of the tangential representation at each fixed component. Weights at $F_{\max}$ are nonpositive with our conventions. A Hamiltonian $S^1$ action is semifree if and only if all weights are one of $-1, 0, 1$. To see this, let $w \in M$ be a point with isotropy subgroup $\Z/k$. Since $\gamma_s$ commutes with the $S^1$ action, the isotropy subgroup of $\gamma_s(w)$ is also $\Z/k$ for all~$s$. The limit point as $s \rightarrow -\infty$ is a fixed point of the $S^1$ action having a multiple of~$k$ as a weight.

The moment map $H$ is determined up to constant addition. For monotone symplectic manifolds, after normalization $[\omega] = c_1(M)$, it is convenient to use the following lemma to fix a choice of~$H$ so that the moment map image of a fixed component is the negative of the sum of weights.

\begin{lem}[\cite{McT} Lemma~3.9]\label{lem:moment_map}
Let $S^1$ act on $(M, \omega)$ with moment map~$H$. Let $B$ be the homology class represented by a gradient sphere joining two fixed points $x$ and~$y$. Here the orientation of $B$ is given by a gradient flow from~$x$ to~$y$, followed by the $S^1$ action. Then
\[c_1(B) = (m(x) - m(y))/k \quad \text{and} \quad \omega(B) = (H(y) - H(x))/k,\]
where $m(x)$ is the sum of weights at fixed point $x$, and $k$ is the isotropy weight of the gradient sphere.
\end{lem}

\subsection{Seidel representation}\label{subsec:Seidel}
We define the Seidel representation for Hamiltonian circle actions. We refer to \cite{McS2, McT} for detailed explanation of the Seidel representation.

Let $\phi$ be a Hamiltonian $S^1$ action on $(M^{2n}, \omega)$ with moment map $H$. Recall that we are using the convention $S^1 = \R/\Z$ so that $\phi$ has period~$1$. This action defines a Hamiltonian bundle $\pi\colon P_{\phi} \rightarrow S^2$ with fiber $M$ as follows. Let $S^3$ be the unit sphere in $\C^2$. Consider the following $S^1$ action
\[t \cdot (z_1, z_2, w) = \left(e^{2\pi it}z_1, e^{2\pi it}z_2, \phi_t(w)\right)\]
on the product $S^3 \times M$. The bundle is defined to be the quotient space
\[P_{\phi} := S^3 \times_{S^1} M\]
with $\pi$ induced from the projection to the first factor. $P_{\phi}$ is determined by the homotopy class of $\phi$ as an element of $\pi_1(\Ham(M, \omega))$ and has a symplectic structure extending the fiberwise symplectic form.

Two cohomology classes in $H^2(P_{\phi},\Z)$ are associated to this bundle. The first one is $c_{\mathrm{vert}}$, the first Chern class of the vertical tangent bundle of $P_{\phi}$. The second one is the coupling class $u_{\phi}$, the unique class satisfying $u_{\phi}|_M = [\omega]$ and $u_{\phi}^{n+1} = 0$. We use
\begin{equation}\label{eq:Novikov}
\Lambda:= \Lambda^{\univ}[q,q^{-1}]
\end{equation}
as the coefficient ring for quantum cohomology where $q$ is a variable of degree~$2$ and
\[\Lambda^{\univ}:= \Big\{\sum_{\kappa \in \R}
r_{\kappa}t^{\kappa} \;\Big|\; r_{\kappa} \in \Q,\;
\#\{\kappa < c \mid r_{\kappa} \neq 0\}<\infty, \forall c \in \R\Big\}\]
with $\deg t =0$.

\begin{defn}\label{def:Seidel_element}
The \textbf{Seidel element} is defined to be
\[S(\phi)= \sum_{\sigma \in H_2^{\sec}(P_{\phi})}
b_{\sigma} \otimes q^{c_{\mathrm{vert}}(\sigma)} t^{u_{\phi}(\sigma)}
\in QH^0(M; \Lambda)\]
where $b_{\sigma} \in H^*(M, \Q)$ is a unique class satisfying
\begin{equation}\label{eq:Seidel_element}
\int_M b_{\sigma} \cup c = \GW_{\sigma, 1}^{P_{\phi}}(i_*c)
\end{equation}
for all $c \in H^*(M, \Q)$.
Here, $i_*\colon H^*(M, \Q) \rightarrow H^{*+2}(P_{\phi}, \Q)$ is the cohomology pushforward of inclusion $i\colon M \hookrightarrow P_{\phi}$ defined using Poincar\'e duality. The section homology $H_2^{\sec}(P_{\phi})$ consists of $\sigma \in H_2(P_{\phi}, \Z)$ with $\pi_* \sigma = [S^2]$.
\end{defn}

\begin{rem}
McDuff and Tolman~\cite{McT} used quantum homology to explain their results. Using Poincar\'e duality
\[QH_*(M;\check{\Lambda}) \rightarrow QH^*(M;\Lambda):
a \otimes q^d t^{\kappa} \mapsto PD(a) \otimes q^{-d} t^{-\kappa},\]
we present their results in the cohomology form. Here $\check{\Lambda}$ is the homology version of the Novikov ring.
\end{rem}

McDuff~\cite{Mc} proved that $S\colon \pi_1(\Ham(M, \omega)) \rightarrow QH^0(M; \Lambda)^{\times}$ is a group homomorphism. The multiplication by $S(\phi)$ gives an automorphism of the quantum cohomology ring.

\begin{defn}
The \textbf{Seidel representation} is the group homomorphism 
\[S\colon \pi_1(\Ham(M, \omega)) \rightarrow \Aut(QH^*(M; \Lambda))\]
defined by
\begin{equation}
S(\phi)(a) = S(\phi) * a
\end{equation}
for $a \in QH^*(M; \Lambda)$ where $*$ denotes the small quantum cup product.
\end{defn}

It was shown in~\cite{Mc} that for $a \in H^*(M, \Q)$,
\[S(\phi)(a) = \sum_{\sigma \in H_2^{\sec}(P_{\phi})}
b_{\sigma, a}\otimes q^{c_{\mathrm{vert}}(\sigma)} t^{u_{\phi}(\sigma)}\]
where $b_{\sigma, a} \in H^*(M, \Q)$ is a unique class satisfying
\begin{equation}\label{eq:Seidel_representation}
\int_M b_{\sigma, a} \cup c = \GW_{\sigma, 2}^{P_{\phi}}
(i_*a, i_*c)
\end{equation}
for all $c \in H^*(M, \Q)$. Here $i_*$ is the cohomology pushforward as in Definition~\ref{def:Seidel_element}. The class $b_{\sigma}$ defined in~\eqref{eq:Seidel_element} is equal to $b_{\sigma, 1}$.

\subsection{Computation results by McDuff and Tolman}
We present here some of the computation results by McDuff and Tolman~\cite{McT}.

Let $F$ be a fixed component and choose $x \in F$. Let $\sigma_x \in H_2^{\sec}(P_{\phi})$ be the homology class represented by the sphere $S^3 \times_{S^1} \{x\}$. The homology class $\sigma_x$ does not depend on the choice of $x$ and will be denoted by $\sigma_F$.  McDuff and Tolman~ (Lemma~2.2 in~\cite{McT}) showed that $c_{\mathrm{vert}}(\sigma_F) = m(F)$ and $u_{\phi}(\sigma_F) = -H(F)$ where $m(F)$ is the sum of weights at $F$, and the moment map $H$ is normalized so that $\int_{M^{2n}} H\omega^n = 0$. Since we will not use the fact that $S$ is a group homomorphism, $H$ need not be normalized for our purposes. Any section class $\sigma$ is written as the sum $\sigma_F + B$ for some $B \in H_2(M, \Z)$. Using the notation
\begin{equation}
a_B:= b_{\sigma_{F_{\max}}+B} \quad \text{(defined in~\eqref{eq:Seidel_element})}
\end{equation}
the Seidel element is written as
\begin{equation}
S(\phi) =\sum_{B \in H_2(M, \Z)}
a_B\otimes q^{m(F_{\max}) + c_1(B)} t^{-H_{\max} + \omega(B)} \in QH^0(M; \Lambda).
\end{equation}
The following theorem computes the first term of the Seidel element.

\begin{thm}[\cite{McT} Theorem~1.10 (i)]\label{thm:Seidel_element}
Let $\phi$ be a Hamiltonian $S^1$ action on a closed symplectic manifold $(M, \omega)$. Suppose all negative weights at $F_{\max}$ are $-1$. Then
\[S(\phi) = [F_{\max}] \otimes q^{m(F_{\max})} t^{-H_{\max}}
+\sum_{\substack{B \in H_2(M, \Z) \\ \omega(B)>0}}
a_B\otimes q^{m(F_{\max}) + c_1(B)} t^{-H_{\max} + \omega(B)},\]
where $[F_{\max}]$ is the cohomology class represented by $F_{\max}$.
\end{thm}

The next lemma is used to get information on the latter terms. We will use this lemma later when we want to show $a_B=0$. A pseudocycle $f\colon V \rightarrow M$ is called $S^1$-invariant if $\overline{f(V)}$ is $S^1$-invariant.
\begin{lem}[\cite{McT} Lemma 3.10]\label{lem:3.10}
Let $f\colon V \rightarrow M$ be an $S^1$-invariant pseudocycle representing the cohomology class $a$. Then $\int_M a_B \cup a = 0$ if $B$ cannot be represented by an $S^1$-invariant $J$-holomorphic stable map that intersects both $F_{\max}$ and $\overline{f(V)}$.
\end{lem}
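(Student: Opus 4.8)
The plan is to translate the pairing into a Gromov--Witten invariant of the total space $P_{\phi}$ and then argue by contraposition. By the defining property of the classes $b_{\sigma}$, applied to $\sigma = \sigma_{F_{\max}} + B$,
\[\int_M a_B \cup a = \int_M b_{\sigma_{F_{\max}}+B} \cup a = \GW_{\sigma_{F_{\max}}+B,\,1}^{P_{\phi}}(i_* a).\]
Thus it suffices to show that this invariant of $P_{\phi}$ vanishes whenever $B$ admits no $S^1$-invariant $J$-holomorphic stable map in $M$ meeting both $F_{\max}$ and $\overline{f(V)}$. Throughout I would fix the residual Hamiltonian circle action on $P_{\phi} = S^3 \times_{S^1} M$ induced by rotating the first coordinate of $S^3$; it commutes with the defining action and so descends to $P_{\phi}$, and I will write $\Phi$ for its moment map. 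Its maximum is attained along a copy $\widetilde{F}_{\max}$ of $F_{\max}$ sitting in the fiber over a fixed point of the base, and the section class $\sigma_{F_{\max}}$ is represented by the invariant section $S^3 \times_{S^1}\{x\}$, $x \in F_{\max}$, which passes through $\widetilde{F}_{\max}$.

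Next I would assemble invariant data. Choose an $S^1$-invariant $\omega$-compatible almost complex structure $J$ on $P_{\phi}$. Since $f$ is $S^1$-invariant, the set $\overline{f(V)}$ is an $S^1$-invariant subset of $M$, so including it into a fixed fiber produces an $S^1$-invariant cycle $N \subset P_{\phi}$ that is Poincar\'e dual to $i_* a$. With this choice, the invariant $\GW_{\sigma_{F_{\max}}+B,\,1}^{P_{\phi}}(i_* a)$ is computed by $J$-holomorphic spheres in class $\sigma_{F_{\max}} + B$ that pass through $N$.

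Now suppose, for contradiction, that the invariant is nonzero. As in the proof of Proposition~\ref{sec:upper_bound}, since the invariant is nonzero there is, for each generic compatible almost complex structure and generic representative of the constraint, a holomorphic sphere in class $\sigma_{F_{\max}}+B$ meeting it; choosing such a sequence converging to the invariant $J$ and to $N$ and applying Gromov compactness, I obtain a $J$-holomorphic stable map $u$ in class $\sigma_{F_{\max}}+B$ meeting $N$. I would then extract an invariant limit from $u$: the invariance of $J$ yields a maximum principle for $\Phi$ along $J$-holomorphic curves, and a Morse-theoretic analysis of the gradient flow of $\Phi$, in the spirit of Frankel and Atiyah--Bott, shows that flowing $u$ and passing to a Gromov limit produces an $S^1$-invariant $J$-holomorphic stable map $u_{\infty}$ in the same homology class. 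Since $N$ is closed and $S^1$-invariant, $u_{\infty}$ still meets $N$. Finally I would read off the structure of $u_{\infty}$: its horizontal component is the invariant section in class $\sigma_{F_{\max}}$, which therefore passes through $F_{\max}$, while its vertical components lie in fibers over fixed points of the base action and are $S^1$-invariant spheres in $M$, that is, gradient spheres, whose total class is $B$. Transporting this configuration into a single copy of $M$ yields an $S^1$-invariant $J$-holomorphic stable map representing $B$ and meeting both $F_{\max}$ and $\overline{f(V)}$, contradicting the hypothesis; hence $\int_M a_B \cup a = 0$.

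The hard part will be the degeneration step and the ensuing decomposition of $u_{\infty}$. Showing that $u$ flows to an honest $S^1$-invariant stable map in the same class, with no symplectic area lost, is exactly the Morse-theoretic analysis of $J$-holomorphic curves under a Hamiltonian action alluded to above, and one must rule out spurious vertical bubbles in the minimal fiber, where the chosen circle acts trivially; I expect to handle this by working equivariantly with the fiberwise action as well, so that invariance pins down every vertical component as a gradient sphere. The underlying difficulty is that an invariant $J$ need not be regular, so the count cannot be read directly off the invariant moduli space, and managing this equivariant transversality is precisely the technical content supplied by McDuff and Tolman.
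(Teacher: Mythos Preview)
The paper does not prove this lemma: it is quoted from \cite{McT} as a black box and applied without argument, so there is no in-paper proof to compare against.

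As for your sketch, the overall shape---rewriting $\int_M a_B\cup a$ as a one-point invariant of $P_\phi$, choosing invariant data, and degenerating to an $S^1$-invariant stable map---is indeed the strategy McDuff and Tolman use. But two of your steps, as written, do not go through. First, the residual circle you introduce (rotating one $S^3$ coordinate) has as fixed locus one entire fiber $M$ together with $M^{S^1}$ in the other fiber; its maximum is not an isolated $\widetilde{F}_{\max}$, so the Morse-theoretic picture you invoke for $\Phi$ does not force an invariant limit to touch $F_{\max}$. Second, even granting an invariant limit $u_\infty$, your claim that ``its horizontal component is the invariant section in class $\sigma_{F_{\max}}$'' is unjustified: an invariant section class can be any $\sigma_F$, and pinning down that the section piece is $\sigma_{F_{\max}}$ (hence meets $F_{\max}$) requires an area/energy argument you have not supplied. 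These are exactly the points where McDuff and Tolman's proof does real work---they control which invariant configurations can occur by analyzing areas of section classes and using a carefully chosen fibered almost complex structure---and your final paragraph correctly identifies that the substance lies there rather than in the formal reduction.
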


They also computed the first term of the Seidel representation. To explain their result, we need the notion of \textbf{canonical classes}. 

\begin{lem}[\cite{McT} Lemma~1.13]
Suppose $S^1$ acts on $(M, \omega)$ with a moment map $H$. For a fixed component $F$ of the action, choose any cohomology class $c_F \in H^i(F, \Q)$. Let $\alpha$ be the index of $F$ and $e_F^-$ be the equivariant Euler class of the negative normal bundle of $F$. Then there exists a unique equivariant cohomology class $e(c_F^+) \in H_{S^1}^{i+\alpha}(M, \Q)$ satisfying

\begin{enumerate}[(a)]
	\item
		$\left.e(c_F^+)\right|_{F'} = 0$ for all fixed components $F'$ with $H(F') < H(F)$,
	\item
		$\left.e(c_F^+)\right|_F = c_F \cup e_F^-$, and
	\item
		the polynomial degree of $\left.e(c_F^+)\right|_{F'}$ is less than the index of $F'$ for all fixed components $F' \neq F$.
\end{enumerate}
\end{lem}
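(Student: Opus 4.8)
The plan is to prove this by equivariant Morse theory of the moment map, in the spirit of Atiyah--Bott and Kirwan, treating existence and uniqueness by induction over the critical levels of $H$. Throughout, write $M_c := H^{-1}((-\infty,c])$ for a regular value $c$, and recall two classical inputs. First, since the $S^1$-action on the negative normal bundle $\nu^-_{F'}$ of each fixed component $F'$ has no zero weights, its equivariant Euler class $e^-_{F'}$ is, after an isomorphism $H^*_{S^1}(F') \cong H^*(F')[u]$ with $u$ the equivariant parameter, a polynomial in $u$ whose leading coefficient is the (nonzero) product of the weights of $\nu^-_{F'}$; in particular $e^-_{F'}$ is not a zero divisor and has invertible leading coefficient. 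Second, the equivariant Thom isomorphism gives $H^*_{S^1}(M_{c'},M_c) \cong \bigoplus_{F' \in \mathcal{F}} H^{*-\alpha_{F'}}_{S^1}(F')$, where $\mathcal{F}$ is the set of fixed components with critical value in $(c,c')$ and $\alpha_{F'}$ is the index of $F'$; composing the natural map $j^*\colon H^*_{S^1}(M_{c'},M_c) \to H^*_{S^1}(M_{c'})$ with restriction to the $F' \in \mathcal{F}$ gives the block-diagonal map whose $F'$-block is cup product with $e^-_{F'}$, which is injective by the first input. From the long exact sequence of the pair, injectivity of $j^*$ is equivalent to surjectivity of $H^*_{S^1}(M_{c'}) \to H^*_{S^1}(M_c)$, i.e. to the equivariant perfectness of $H$, and it also yields the Kirwan injectivity $H^*_{S^1}(M_c) \hookrightarrow \bigoplus_{F' \subset M_c} H^*_{S^1}(F')$.

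For existence, I would first pick $\epsilon > 0$ so that $H(F)$ is the only critical value in $[H(F)-\epsilon,H(F)+\epsilon]$ and set $c^{\pm} := H(F) \pm \epsilon$. Via the Thom isomorphism, take the relative class in $H^{i+\alpha}_{S^1}(M_{c^+},M_{c^-})$ equal to $c_F$ in the summand of $F$ and zero in the others, and let $\theta$ be its image in $H^{i+\alpha}_{S^1}(M_{c^+})$. Since a Thom class restricts to the Euler class on the zero section and vanishes off the disk bundle, $\theta$ vanishes on $M_{c^-}$ (hence on every fixed component below level $H(F)$), restricts to $c_F \cup e^-_F$ on $F$, and vanishes on the remaining fixed components at level $H(F)$. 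Next I would extend $\theta$ one critical level at a time: given an extension over $M_c$ that already satisfies condition (3) on all fixed components below $c$, surjectivity of $H^*_{S^1}(M_{c'}) \to H^*_{S^1}(M_c)$ supplies a lift over $M_{c'}$, whose indeterminacy is $j^* H^*_{S^1}(M_{c'},M_c)$; restricted to a fixed component $F'$ at the new critical level this indeterminacy is $e^-_{F'}\cdot H^*_{S^1}(F')$, so dividing the restriction of the lift by $e^-_{F'}$ (legitimate because its leading coefficient is a unit) lets me correct the lift so that its restriction to each new $F'$ has polynomial degree below the threshold of condition (3), without changing the restriction to $M_c$. Iterating past $\max H$ produces $\widetilde{c_F^+} \in H^{i+\alpha}_{S^1}(M)$ satisfying conditions (1)--(3).

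For uniqueness, let $d$ be the difference of two classes satisfying conditions (1)--(3); then $d|_{F'} = 0$ for $H(F') \le H(F)$ by conditions (1) and (2), while $d|_{F'}$ still satisfies the degree bound of condition (3) for $H(F') > H(F)$. By Kirwan injectivity it suffices to show $d|_{F'} = 0$ for every fixed component, which I would prove by upward induction on critical levels. If $d$ vanishes on all fixed components of moment value $< L$, then $d|_{M_c} = 0$ for $c$ just below $L$, so for $c'$ just above $L$ the class $d|_{M_{c'}}$ lies in $j^* H^*_{S^1}(M_{c'},M_c)$; writing it as $j^*\zeta$ with $\zeta = (\zeta_{F'})$ and restricting to a fixed component $F'$ at level $L$ gives $d|_{F'} = e^-_{F'}\cdot\zeta_{F'}$. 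If $F' = F$ this is $0$, forcing $\zeta_F = 0$ because $e^-_F$ is not a zero divisor; for every other $F'$ at level $L$, multiplication by $e^-_{F'}$ raises polynomial degree, so the degree bound of condition (3) on $d|_{F'}$ forces $\zeta_{F'} = 0$. Hence $\zeta = 0$, so $d|_{M_{c'}} = 0$, the induction runs, and $d = 0$.

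I expect the main obstacle to be the degree bookkeeping in the last two steps: one must pin down precisely how ``polynomial degree less than the index of $F'$'' in condition (3) compares with the polynomial degree of the Euler class $e^-_{F'}$ (the number of negative weights at $F'$), so that in the uniqueness argument division by $e^-_{F'}$ genuinely forces the correction term to vanish and in the existence argument the correction can always be arranged. The remaining ingredients --- the equivariant Thom isomorphism, the non-zero-divisor property of the $e^-_{F'}$, and the consequent equivariant perfectness and Kirwan injectivity --- are by now standard in the equivariant Morse theory of moment maps.
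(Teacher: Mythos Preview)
The paper does not give its own proof of this lemma: it is quoted verbatim as \cite{McT}~Lemma~1.12 and used as a black box. So there is nothing in the paper to compare your argument against.

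That said, your approach is exactly the standard one, and it is essentially how McDuff and Tolman prove the result in \cite{McT}: equivariant perfectness of the moment map (via the non--zero--divisor property of the equivariant Euler classes of the negative normal bundles), the equivariant Thom isomorphism, and an upward induction over critical levels, first to construct a lift and then to correct it so that the restriction to each new fixed component has small polynomial degree. Your uniqueness argument by Kirwan injectivity plus the same level-by-level induction is also the expected one.

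On the degree bookkeeping you flag at the end: the convention in \cite{McT} is that ``polynomial degree'' is measured in cohomological degree, so the generator $u\in H^2_{S^1}(\mathrm{pt})$ has polynomial degree~$2$. With this convention the equivariant Euler class $e^-_{F'}$ has polynomial degree exactly $\alpha_{F'}$ (twice the number of negative weights, which is the Morse index), not $\alpha_{F'}/2$. Once that is in place, your division-with-remainder step in the existence argument produces a remainder of polynomial degree strictly less than $\alpha_{F'}$, and in the uniqueness argument any nonzero multiple of $e^-_{F'}$ has polynomial degree at least $\alpha_{F'}$, so condition~(3) forces the correction term to vanish. With this clarification your sketch goes through.
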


Such a unique class $e(c_{F}^+)$ is called the canonical class. The \textbf{upward extension} $c_F^+ \in H^{i+\alpha}(M, \Q)$ is defined to be the restriction of $e(c_F^+)$ to the ordinary cohomology. The \textbf{downward extension} $c_F^- \in H^{i+2n-\alpha-\dim F}(M, \Q)$ is defined in the same way using $-H$ instead of $H$.

\begin{rem}\label{rem:basis}
 By choosing a basis of $H^*(F, \Q)$ over all fixed components $F$, we obtain a basis by canonical classes for $H_{S^1}^*(M, \Q)$ as an $H^*(BS^1, \Q)$-module. By restricting to the ordinary cohomology, the upward extensions $\{c_F^+\}$ and the downward extensions $\{c_F^-\}$ form a basis for $H^*(M, \Q)$, respectively.
\end{rem}

\begin{thm}[\cite{McT} Theorem~1.15]\label{thm:Seidel_representation}
Let $\phi$ be a semifree Hamiltonian $S^1$ action on a closed symplectic manifold $(M, \omega)$ with moment map $H$. Choose a cohomology class $c_F \in H^*(F, \Q)$ for a given fixed component $F$. Then
\[S(\phi)(c_F^-) = c_F^+ \otimes q^{m(F)} t^{-H(F)}
+\sum_{\substack{B \in H_2(M, \Z) \\ \omega(B)>0}}
a_{B,c_F}\otimes q^{m(F) + c_1(B)} t^{-H(F) + \omega(B)}.\]
Here we are using the notation $a_{B,c_F} := b_{\sigma_F + B, c_F^-}$ (defined in~\eqref{eq:Seidel_representation}) so that
\[\int_M a_{B, c_F} \cup c = \GW_{\sigma_F + B, 2}^{P_{\phi}}(i_*c_F^-, i_*c)\]
for all $c \in H^*(M, \Q)$.
\end{thm}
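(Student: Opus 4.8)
The plan is to compute the Gromov--Witten invariants of $P_\phi$ that define the classes $a_{B,c_F}$ by $S^1$-equivariant localization, exploiting the residual circle action on $P_\phi$ that is available because $\phi$ is a genuine circle action and not merely a loop. Concretely, the commuting $S^1$-action $(z_1,z_2,w)\mapsto(e^{2\pi is}z_1,e^{-2\pi is}z_2,w)$ on $S^3\times M$ descends to an $S^1$-action on $P_\phi$ covering a rotation of the base $S^2=S^3/S^1$ that fixes the two poles; its fixed-point set is the disjoint union $M^{S^1}_0\sqcup M^{S^1}_\infty$ of two copies of $M^{S^1}$, one in the fibre over each pole, and for a fixed component $F$ the $S^1$-invariant section $S^3\times_{S^1}\{x\}$, $x\in F$, joins the copies $F_0$ and $F_\infty$ and represents $\sigma_F$, with $c_{\mathrm{vert}}(\sigma_F)=m(F)$. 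I would fix a generic $S^1$-invariant $\Omega_\lambda$-compatible almost complex structure $J$ on $P_\phi$ that restricts to an $S^1$-invariant $\omega$-compatible one on the fibres, and apply the virtual localization formula to $\overline{\mathcal{M}}_{0,2}(P_\phi,\sigma_F+B;J)$ with the two evaluation constraints Poincar\'e dual to $i_*c_F^-$ and $i_*c$.

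The crux is the shape of the $S^1$-fixed loci. Just as for gradient spheres in $M$, an $S^1$-invariant $J$-holomorphic stable map whose class projects to $[S^2]$ consists of exactly one section component $S^3\times_{S^1}\{x\}$ through a fixed point $x\in F'\subset M^{S^1}$ (contributing $\sigma_{F'}$, with $c_{\mathrm{vert}}(\sigma_{F'})=m(F')$) together with trees of gradient spheres of $H$ lying in the two fibres over the poles. Writing $\sigma_{F'}=\sigma_F+C$ with $C\in H_2(M,\Z)$, Lemma~\ref{lem:moment_map} gives $c_1(C)=m(F')-m(F)$, the total fibre class carried by the bubbles is $B-C$, and monotonicity forces $c_1(B-C)\geq 0$ with equality precisely when there is no nonconstant bubble. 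When $B=0$ the constraint $i_*c_F^-$ at the first marked point --- using that the equivariant lift of $c_F^-$ vanishes on all fixed components whose $H$-value exceeds that of $F$ --- together with this positivity and a dimension count on the fixed loci, leaves only one contributing locus: the copy of $F$ parametrising the sections through $F$, with no bubbling. Its localization contribution is an integral over $F$ assembled from $c_F$, $c|_F$ and the equivariant Euler classes of the normal directions of $F$ in $M$; by equivariant localization on $M$ itself together with the defining conditions of the canonical class $\widetilde{c_F^+}$ of \cite{McT}, this integral equals $\int_M c_F^+\cup c$, so the coefficient of $q^{m(F)}$ in $S(\phi)(c_F^-)$ is $c_F^+$. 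Taking $F=F_{\max}$ and $c_F=1$ (so that $c_F^-=1$ and $c_F^+=[F_{\max}]$) recovers Theorem~\ref{thm:Seidel_element}.

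It remains to show $a_{B,c_F}=0$ whenever $c_1(B)<0$, i.e.\ that $m(F)$ is the lowest power of $q$ occurring. Here I would invoke the analogue of Lemma~\ref{lem:3.10} for the classes $a_{B,c_F}$, namely that $\int_M a_{B,c_F}\cup c=0$ unless $B$ is represented by an $S^1$-invariant $J$-holomorphic stable map in $M$ meeting cycles representing both $c_F^-$ and $c$; such a $B$ is a sum of gradient-sphere classes, and Lemma~\ref{lem:moment_map} controls the first Chern number of each one while the genuinely fibred components contribute $\omega(B_\alpha)>0$ and hence, by monotonicity, $c_1(B_\alpha)>0$. The step I expect to be the main obstacle is making this reduction airtight alongside the dimension bookkeeping behind the first-term computation: one must check that the relevant fixed loci in $\overline{\mathcal{M}}_{0,2}(P_\phi,\sigma_F+B;J)$ are transversally cut out of the expected dimension, and that the interaction of the $c_F^-$-constraint with the Morse-theoretic combinatorics of gradient-sphere chains excludes every invariant configuration of vertical degree below $m(F)$, in particular those whose section part $\sigma_{F'}$ satisfies $m(F')<m(F)$. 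This is exactly where semifreeness enters: all normal weights being $\pm1$ keeps the fixed loci in $P_\phi$ smooth of the expected dimension and pins down the energies and first Chern numbers of the gradient spheres that can occur.
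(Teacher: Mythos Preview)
The paper does not prove this theorem at all: it is quoted verbatim as Theorem~1.14 of McDuff--Tolman \cite{McT} and used as a black box, so there is no ``paper's own proof'' here to compare your sketch against.

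For what it is worth, the actual argument in \cite{McT} is not a virtual localization computation. McDuff and Tolman work with a fibred $\omega$-compatible almost complex structure on $P_\phi$ and analyse the moduli space of $J$-holomorphic sections directly: an energy/area estimate shows that a section in the class $\sigma_{F_{\max}}$ (or more generally the minimal-energy section class relative to a given constraint) must be one of the constant sections $S^3\times_{S^1}\{x\}$, and the leading term is then read off from the intersection theory of the stable and unstable manifolds of $H$ via Proposition~\ref{prop:downward extension}. The vanishing of lower-order terms comes from monotonicity and Gromov compactness rather than from a fixed-locus dimension count. Your localization picture is the right heuristic and captures the combinatorics of the invariant stable maps correctly, but making it rigorous would require a transversality statement for $S^1$-invariant $J$ on $P_\phi$ that \cite{McT} sidesteps by arguing with sections and fibre classes separately.
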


When the action is semifree, the upward extensions and the downward extensions have a nice geometric interpretation. Given a Riemannian metric on $M$, denote by $\psi_t$ the negative gradient flow of $H$. For $C \subset M$, the \textbf{unstable manifold} of $C$ is defined to be
\[W^u(C):= \{x \in M \mid \lim_{t \rightarrow -\infty}\psi_t(x) \in C\}.\]
\begin{prop}[\cite{McT} Proposition~4.8]\label{prop:downward extension}
Let $S^1$ act semifreely on $(M, \omega)$ with moment map $H$. Choose a Riemannian metric on $M$ associated to a generic $S^1$-invariant $\omega$-compatible almost complex structure. Then for a generic submanifold $C$ of a fixed component $F$, the unstable manifold $W^u(C)$ is an $S^1$-invariant pseudocycle. The cohomology class represented by $W^u(C)$ is equal to the downward extension $[C]^-$, where $[C]$ denotes the cohomology class in~$H^*(F)$ represented by~$C$.
\end{prop}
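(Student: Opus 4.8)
The plan is to establish the two assertions in turn: first that $W^u(C)$ is an $S^1$-invariant pseudocycle, and then that the equivariant class it carries is exactly the canonical class $\widetilde{c_F^-}$ defining the downward extension, so that restricting to ordinary cohomology gives $[W^u(C)] = [C]^-$. Throughout I write $\alpha := \alpha(F)$ for the index of $F$, that is, the real rank of the negative normal bundle $\nu^-(F)$ of $H$, so that $d := \dim W^u(C) = \dim C + \alpha$, which matches the degree $2n-d$ of $[C]^-$ (taking $c_F := PD_F[C]$, so that $[C]^-$ is its downward extension).

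For invariance and smoothness, note that the metric comes from an $S^1$-invariant almost complex structure, so the negative gradient flow $\psi_t$ commutes with the action, and $C \subset F$ is fixed pointwise; hence $\lim_{t\to-\infty}\psi_t(x) \in C$ implies the same for every $g\cdot x$, and both $W^u(C)$ and $\overline{W^u(C)}$ are $S^1$-invariant. For a generic invariant metric the Morse--Bott--Smale condition holds, so $W^u(F)$ is a smoothly embedded submanifold diffeomorphic to the total space of $\nu^-(F)$, and for generic $C$ its restriction $W^u(C)$ is a smooth $d$-dimensional submanifold; I take $(V,f)$ to be this manifold with its inclusion. To verify the pseudocycle condition I would identify $\Omega_f$ with the frontier $\overline{W^u(C)}\setminus W^u(C)$, which by Morse--Bott compactness is flow-invariant and decomposes into unstable manifolds $W^u(C'')$ with $C'' \subset F'$ and $H(F')<H(F)$, where $C'' = \overline{W^u(C)}\cap F'$ is the set of endpoints of broken trajectories from $C$. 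The crucial point is that trajectories joining the fixed sets $C$ and $F'$ occur in $S^1$-families: the action moves such a trajectory to another one with the same endpoints (both endpoints lie in fixed sets), and together with the reparametrization flow this gives a free $S^1\times\R$ action with two-dimensional orbits, namely the gradient spheres. Since $\dim(W^u(C)\cap W^s(F')) = \dim C + \alpha - \alpha(F')$, quotienting by this free action yields $\dim C'' \le \dim C + \alpha - \alpha(F') - 2$, whence $\dim W^u(C'') \le d-2$; multiply-broken strata are of even lower dimension, so $\dim\Omega_f \le d-2$. It is exactly the extra circle of symmetry, beyond the usual Morse--Smale count, that upgrades the generic codimension-one breaking to codimension two.

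To identify the class, I use that the invariant pseudocycle $W^u(C)$ represents an equivariant class $[W^u(C)]_{S^1} \in H^{2n-d}_{S^1}(M)$ lifting $[W^u(C)]$, and check the three properties characterizing the canonical class for $-H$. First, every $x \in W^u(C)$ satisfies $H(x) \le H(F)$, so $\overline{W^u(C)} \subset \{H \le H(F)\}$ is disjoint from each $F'$ with $H(F')>H(F)$; representing the class by an equivariant Thom form supported near $\overline{W^u(C)}$ gives $[W^u(C)]_{S^1}|_{F'} = 0$ there. Second, near $F$ the cycle is the total space of $\nu^-(F)$ over $C$, meeting the zero section $F$ cleanly along $C$, and the equivariant excess intersection formula gives $[W^u(C)]_{S^1}|_F = c_F \cup e_{S^1}(E)$ with excess bundle $E = T_M|_C/(T_{W^u(C)}|_C + T_F|_C) = \nu^+(F)|_C$, that is, $c_F \cup e_F^+$, as the canonical class for $-H$ requires. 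Third, for $F'\neq F$ below $F$, clean excess intersection along $C''$ yields restriction $PD_{F'}[C''] \cup e_{S^1}(E')$ whose $u$-polynomial degree equals the complex rank of $E'$, namely $\tfrac12(\dim C'' - d - \dim F' + 2n)$; the bound $\dim C'' \le d - \alpha(F') - 2$ makes this at most $\tfrac12(2n - \alpha(F') - \dim F') - 1$, strictly below the threshold in condition (3). By uniqueness of the canonical class this forces $[W^u(C)]_{S^1} = \widetilde{c_F^-}$, and restriction to ordinary cohomology gives $[W^u(C)] = [C]^-$.

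The main obstacle I expect is the transversality package that underwrites everything else: arranging an $S^1$-invariant metric (coming from a generic invariant almost complex structure) for which the Morse--Bott--Smale condition holds, $W^u(C)$ is smooth for generic $C$, and the intersections $W^u(C)\cap W^s(F')$ and $W^u(C)\cap F'$ are clean with the excess bundles identified as above. Once these genericity statements are secured, the dimension counts and the excess-intersection computations are essentially forced, and the even parity of the indices together with the free circle symmetry supplies the remaining two units of codimension.
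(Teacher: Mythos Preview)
The paper does not give a proof of this proposition: it is quoted verbatim from McDuff--Tolman \cite{McT} as their Proposition~4.8, and no argument is supplied here. So there is no proof in this paper to compare your attempt against.

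That said, your outline tracks the expected mechanism. The pseudocycle part is essentially right: semifreeness makes the $S^1\times\R$ action free on the connecting manifolds $W^u(C)\cap W^s(F')$, which is exactly what buys the extra unit of codimension beyond the usual Morse--Smale count and yields $\dim\Omega_f\le d-2$. The identification of the class via the three defining properties of the canonical class for $-H$ is also the natural strategy, and your checks of conditions (1) and (2) are fine.

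The soft spot is your verification of condition (3). You invoke a clean excess-intersection formula for $\overline{W^u(C)}$ meeting $F'$ along $C''$, but $\overline{W^u(C)}$ is not a submanifold and $C''$ need not be smooth, so the formula $[W^u(C)]_{S^1}|_{F'}=PD_{F'}[C'']\cup e_{S^1}(E')$ is not available as stated. One can salvage the conclusion more robustly: the restriction $[W^u(C)]_{S^1}|_{F'}$ is computed by the equivariant intersection of the pseudocycle with $F'$, which is supported on a set of real dimension at most $d-\alpha(F')-2$ inside $F'$; since the class has total degree $2n-d$, its polynomial degree in $u$ is at most $\tfrac12\big((2n-d)-(\dim F'-\dim C'')\big)$, and your dimension bound on $C''$ then forces this below $\tfrac12(2n-\dim F'-\alpha(F'))$, the coindex threshold. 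This avoids assuming anything about smoothness of the frontier. You correctly flag the invariant Morse--Bott--Smale transversality as the genuine technical input; that is indeed where the work in \cite{McT} lies.
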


\section{Proof of theorems}\label{sec:proof}
The following is a restatement of Theorem~\ref{thm:main2}.

\begin{thm}\label{thm:Seidel}
Let $\phi$ be a semifree Hamiltonian $S^1$ action on a closed Fano symplectic manifold $(M, \omega)$. Write the Seidel element as in Theorem~\ref{thm:Seidel_element}.
\begin{equation*}\begin{split}
S(\phi) &= [F_{\max}]\otimes q^{m(F_{\max})} t^{-H_{\max}}\\
&\quad + \sum_{\substack{B \in H_2(M, \Z) \\ \omega(B)>0}}
a_B\otimes q^{m(F_{\max}) + c_1(B)} t^{-H_{\max} + \omega(B)} \in QH^0(M; \Lambda).
\end{split}\end{equation*}
Then $a_B = 0$ for all such $B$.
\end{thm}

\begin{proof}
Given a fixed component~$F$, we will show $\int_M a_B \cup c_F^- = 0$ for all $c_F \in H^*(F, \Q)$. This will complete the proof by Remark~\ref{rem:basis}.

Fix $F$ and $c_F \in H^*(F, \Q)$, and suppose $\int_M a_B \cup c_F^- \neq 0$.  We may assume that $c_F$ is represented by a smooth submanifold. Let $C$ be a generic submanifold of~ $F$ representing~$c_F$. By Lemma~\ref{lem:3.10} and Proposition~\ref{prop:downward extension}, there is an $S^1$-invariant $J$-holomorphic stable map in class~$B$ that intersects both $F_{\max}$ and $\overline{W^u(C)}$. It is a connected union of $S^1$-invariant $J$-holomorphic spheres. As explained in \cite{McT}(Section~3.2), the image of each such sphere is either contained in the fixed point set or a gradient sphere joining two fixed points. Therefore, there is a chain of spheres joining two fixed points $x \in \overline{W^u(C)}$ and $y \in F_{\max}$. Write $B = \sum k_i B_i + \sum l_j C_j$, where $B_i$ is the homology class of a sphere in the chain, $C_j$ is the homology class of a sphere not in the chain, and $k_i$,~$l_j \geq 1$ are multiplicities. 

Recall that the index of~$F$ is twice the number of negative weights at~$F$. Since the action is semifree,
\begin{equation}\label{eq:deg_cf}
\deg c_F^- = \dim M - \dim W^u(C) = \dim M - \mathrm{index}(F) \leq \dim M + 2m(F).
\end{equation}
On the other hand, by Lemma~\ref{lem:moment_map} and the assumption that $(M,J)$ is Fano,
\begin{equation}\label{eq:c1}
c_1(B) \geq \sum c_1(B_i) \geq m(x) - m(y) \geq m(F) - m(F_{\max}).
\end{equation}
Since the Seidel element~$S(\phi)$ has degree~$0$, the inequality~\eqref{eq:c1} implies
\begin{equation}\label{eq:deg_ab}
\deg a_B = -2m(F_{\max})-2c_1(B) \leq -2m(F).
\end{equation}
We have assumed $\int_M a_B \cup c_F^- \neq 0$, hence, $\deg a_B + \deg c_F^- = \dim M$. This implies that inequalities in~\eqref{eq:deg_cf} and~\eqref{eq:deg_ab} are equalities. This is possible only if $F = F_{\max}$ and $c_1(B)=0$, which contradicts the Fano assumption.
\end{proof}

Before proving Theorem~\ref{thm:main}, we recall the definition of the Hofer-Zehnder capacity. See \cite{McS1} Chapter~12 for details. We assume for simplicity that $(M, \omega)$ is closed. Let $X_K$ denote the Hamiltonian vector field of a function $K \colon M \rightarrow \R$. A function $K\colon M \rightarrow \R$ is called admissible if
\begin{enumerate}[(a)]
\item there exist open sets $U$ and~$V$ such that $K|_U = K_{\min}$ and $K|_V = K_{\max}$,
\item $X_K$ has no nonconstant periodic orbit of period less than~$1$.
\end{enumerate}
\begin{defn}\label{def:HZ}
The Hofer-Zehnder capacity is defined to be
\[c_{HZ}(M):= \sup\left\{K_{\max} - K_{\min} \mid \textrm{$K$ is admissible}\right\}.\]
\end{defn}
In the proof of Theorem~\ref{thm:main} we use the following four propositions. The last proposition is the key one used in our proof.

\begin{prop}[\cite{KT} Proposition~2.8]\label{prop:KT}
Let $(M, \omega)$ be a closed symplectic manifold with a Hamiltonian $S^1$ action. If all weights at~$F_{\max}$ are $-1$, then
\[
	w_G(M) \geq H_{\max} - H_{\smax}.
\]
\end{prop}

\begin{prop}[Gromov]\label{prop:upper_bound}
Suppose $\GW_{A, k}^M\left([pt], \alpha_2, \dots, \alpha_k\right) \neq 0$ for $k \geq 1$ and $A \neq 0$. Then $w_G(M) \leq \omega(A)$.
\end{prop}
\begin{proof}
See Proposition~4.1 in \cite{KT} or Section~1.3 in \cite{McS2}. The proof applies to general $A \in H_2(M, \Z)$ using Gromov compactness (\cite{McS2} Theorem~5.3.1). If there is a $J$-holomorphic stable map in class~$A$ passing through a given point, then there is a $J$-holomorphic sphere in some class $A_i$ passing through a given point with $0 < \omega(A_i) \leq \omega(A)$.
\end{proof}

\begin{prop}[\cite{Lu1} Corollary~1.19]\label{prop:Lu}
If $\GW_{A,k}^M([pt], [pt], \alpha_3, \dots, \alpha_k) \neq 0$ for $k \geq 2$, then $c_{HZ}(M) \leq \omega(A)$.
\end{prop}

\begin{prop}\label{prop:key}
Under the same assumption as Theorem~\ref{thm:main}, given a fixed component~$F \neq F_{\max}$ and nonzero $c_F \in H^*(F, \Q)$, there exist $A \in H_2(M, \Z)$ and $\alpha \in H^*(M, \Q)$ such that
\[
\GW_{A,3}^M ([F_{\max}], c_F^-, \alpha) \neq 0 \quad\text{and} \quad \omega(A) = H_{\max} - H(F).
\]
\end{prop}
\begin{proof}
By Theorem~\ref{thm:Seidel_representation},
\begin{equation}\begin{split}
S(\phi)*c_F^- &= c_F^+ \otimes q^{m(F)} t^{-H(F)}\\
&\quad + \sum_{\substack{B \in H_2(M, \Z) \\ \omega(B)>0}}
a_{B,c_F} \otimes q^{m(F)+c_1(B)} t^{-H(F)+\omega(B)}.
\end{split}\end{equation}
We use Theorem~\ref{thm:Seidel} and compare the coefficients of $q^{m(F)} t^{-H(F)}$. Since only the first term in~$S(\phi)$ survives, we have
\begin{equation}
c_F^+ = \sum_{\substack{B \in H_2(M, \Z) \\ c_1(B) = m(F) - m(F_{\max}) \\ \omega(B) = H_{\max} - H(F)}}([F_{\max}]*c_F^-)_B,
\end{equation}
where $(a*b)_B$ denotes a unique class satisfying $\int_M (a*b)_B \cup c = \GW_{B,3}^M(a,b,c)$ for all $c \in H^*(M,\Q)$. Since $c_F^+ \neq 0$, there exists $A \in H_2(M, \Z)$ such that
\begin{equation}
\omega(A) = H_{\max} - H(F) \quad \text{and} \quad ([F_{\max}]*c_F^-)_A \neq 0.
\end{equation}
Now choose $\alpha \in H^*(M, \Q)$ so that $\int_M ([F_{\max}]*c_F^-)_A \cup \alpha \neq 0$. Then
\begin{equation}
\GW_{A,3}^M([F_{\max}], c_F^-, \alpha) = \int_M ([F_{\max}]*c_F^-)_A \cup \alpha \neq 0.
\end{equation}
\end{proof}

\begin{proof}[\textbf{Proof of Theorem~\ref{thm:main}}]\leavevmode
\begin{enumerate}[(a)]
\item
To prove the first inequality, we apply Proposition~\ref{prop:key} when $F = F_{\min}$ and $c_F = [pt]$. Since $c_F^- = [pt]$, we use Proposition~\ref{prop:upper_bound} to obtain
\[
	w_G(M) \leq \omega(A) = H_{\max} - H_{\min}.
\]
The second inequality follows by slightly changing our moment map~$H$ to an admissible function~$K$. For any small $\epsilon>0$, we find admissible~$K$ with $K_{\min} = H_{\min} + \pi\epsilon^2$ and $K_{\max} = H_{\max} - \pi\epsilon^2$ as follows. Take a Darboux-Weinstein neighborhood~$U$ of~$F_{\min}$. In this neighborhood the moment map is written as $H = \pi \abs{z}^2 + H_{\min}$, where $\abs{z}$ is the distance from $F_{\min}$ along the normal direction. Choose a smooth function $f\colon [0,2\epsilon] \rightarrow \R$ satisfying
\[0 \leq f'(r) \leq (r^2)', \quad f(r) = \epsilon^2 \text{ near } 0, \quad f(r)=r^2 \text{ near }  2\epsilon.\]
We define $K$ on~$U$ to be $\pi f(\abs{z}) + H_{\min}$. Then nonconstant orbits of $X_K$ have period at least~$1$ by the condition on the derivative of~$f$. We define $K$ similarly on a neighborhood~$V$ of $F_{\max}$ and extend it to the whole manifold by assigning $K=H$ in the complement of $U$ and~$V$. This proves $c_{HZ}(M) \geq H_{\max} - H_{\min}$.
\item
Proposition~\ref{prop:KT} shows $w_G(M) \geq H_{\max} - H_{\smax}$. To prove this is an equality, we apply Proposition~\ref{prop:key} when $F=F_{\smax}$ and $c_F \neq 0$. Since $F_{\max}$ is a point, we use Proposition~\ref{prop:upper_bound} to have $w_G(M) \leq H_{\max} - H_{\smax}$.

The second equality follows if we apply Proposition~\ref{prop:key} when $F=F_{\min}$ and $c_F = [pt]$. This time $[F_{\max}] = c_F^- =[pt]$, so we use Proposition~\ref{prop:Lu} to get $c_{HZ}(M) \leq H_{\max} - H_{\min}$.
\end{enumerate}
\end{proof}

\section{Examples}\label{sec:example}
In this section, we use Theorem~\ref{thm:main} to compute the Gromov width and the Hofer-Zehnder capacity of the product of complex Grassmannians. We give two examples which show that our assumptions cannot be removed.
\begin{exmp}
Let $\Gr(k,m)$ be the Grassmannian of $k$-planes in $\C^m$. We assume $k \leq m-k$ for simplicity. This is a monotone symplectic manifold. The $S^1$ action on $\C^m$ given by
\[t \cdot (z_1, \dots, z_m) = (e^{2\pi it}z_1, \dots, e^{2\pi it}z_k, z_{k+1}, \dots, z_m)\]
induces a Hamiltonian $S^1$ action on $\Gr(k,m)$. To find fixed components, decompose $\C^m$ as the sum of $V_1:= \C^k \times 0$ and $V_2:= 0 \times \C^{m-k}$. If $W$ is an invariant subspace and $v=v_1 + v_2 \in V_1 \oplus V_2$ is an element of $W$, then both $v_1$ and $v_2$ are elements of~$W$. So $W$ is a product of subspaces of $V_1$ and $V_2$. We see that fixed point components are
\[\Gr(k_1, k) \times \Gr(k_2, m-k)\]
for nonnegative integers $k_1+k_2=k$. To compute the weights, decompose the tangent space of $\Gr(k,m)$ as
\begin{align*}
\Hom(\C^k, \C^{m-k}) &= \Hom(\C^k\cap V_1 \oplus \C^k\cap V_2,\, \C^{m-k}\cap V_1 \oplus \C^{m-k}\cap V_2)\\
&= \Hom(\C^{k_1}, \C^{k-k_1})\oplus
\Hom(\C^{k_1}, \C^{m-k-k_2})\\
&\quad \oplus \Hom(\C^{k_2}, \C^{k-k_1})\oplus
\Hom(\C^{k_2}, \C^{m-k-k_2}).
\end{align*}
Weights on $V_1$ are~$1$ and weights on $V_2$ are~$0$. Hence, the first and the fourth factor give weights~$0$, the second factor gives weights~$-1$, and the third factor gives weights~$1$. This shows the action is semifree. The moment map image of $\Gr(k_1,k) \times \Gr(k_2,m-k)$, the negative of the sum of weights by~Lemma~\ref{lem:moment_map}, is
\[k_1(m-k-k_2) - k_2(k-k_1) = k_1(m-k) - k_2k.\]
The largest value is $k(m-k)$ when $k_1=k$, the second largest value is $k(m-k)-m$ when $k_1=k-1$, and the smallest value is $-k^2$ when $k_1 = 0$. By Theorem~\ref{thm:main}, we have
\begin{equation}\label{ex:gr}
w_G(\Gr(k,m)) = m, \quad c_{HZ}(\Gr(k,m)) = km.
\end{equation}
\end{exmp}

In the following example, we see that the Gromov width of the product of symplectic manifolds satisfying the assumptions in Theorem~\ref{thm:main}~(b) is given by the minimum of the width of the factors. It can be larger in general. For example, Lalonde~\cite{La} proved the following: Let $S^2(2)$ be the sphere of Gromov width~$2$ and let $T^2(1)$ be the torus of Gromov width~$1$. The product $S^2(2) \times T^2(1)$ has Gromov width~$2$. For the Hofer-Zehnder capacity, it is given by the sum of the capacities of the factors.
\begin{exmp}
For $i=1, \dots, k$, let $(M_i, \omega_i)$ be a symplectic manifold satisfying the assumptions of Theorem~\ref{thm:main}~(b), so that $(M,J_i)$ is Fano. Consider the product $(M, \omega) := (M_1, \omega_1) \times \dots \times (M_k, \omega_k)$ with the compatible almost complex structure $J:= J_1 \times \dots \times J_k$. Let $A\in H_2(M, \Z)$ represent a $J$-holomorphic curve. Projections $\pi_i\colon M \rightarrow M_i$ are holomorphic, so ${\pi_i}_*A$ represents a $J_i$-holomorphic curve. Then
\[\langle c_1(M), A\rangle = \sum_i \langle \pi_i^* c_1(M_i), A\rangle = \sum_i \langle c_1(M_i), {\pi_i}_* A\rangle \geq 0.\] 
If this is zero, all ${\pi_i}_*A$ represent constant curves, which means $A=0$. So $(M, J)$ is Fano. When given the diagonal action, it satisfies the assumptions of Theorem~\ref{thm:main}~(b). The induced moment map~$H$ is given by $H_1 + \dots + H_k$, where $H_i$ is the moment map for each factor. Since the critical point set of~$H$ coincides with the fixed point set, Theorem~\ref{thm:main} implies that
\begin{equation}
w_G(M, \omega) = \min_i \left\{w_G(M_i, \omega_i)\right\}
\end{equation}
and
\begin{equation}
c_{HZ}(M, \omega) = \sum_i c_{HZ}(M_i, \omega_i).
\end{equation}
\end{exmp}

The semifree assumption in our theorem cannot be removed.
\begin{exmp}\label{ex:blowup}
Let $M$ be a symplectic manifold obtained by blowing up the toric manifold $\mathbb{P}^2$ at a fixed point. Choose the blow up size so that the resulting manifold is monotone. $M$ is a toric manifold with the moment map image of the $T^2$ action given in Figure~\ref{fig:moment map image}. We denote the divisor corresponding to each facet by $D_i$ and their intersection point by $p_{ij} = D_i \cap D_j$. The weights of $T^2$ action at each fixed point $p_{ij}$ are given by outward primitive vectors along the edges.

A pair of relatively prime integers $(a,b)$ gives a subcircle action on $M$, induced from the inclusion $S^1 \hookrightarrow S^1 \times S^1$ given by $t \mapsto (t^a, t^b)$. The induced weights of this circle action is given by the inner product of weights of $T^2$ action with the vector $(a,b)$. Let $\phi_1$ be the circle action given by $(0,1)$. This action is semifree, $F_{\max} = p_{23}$, $F_{\smax} = p_{34}$, and $F_{\min} = D_1$. The moment map image is the negative of the sum of weights, so $H_1(F_{\max}) = 2$, $H_1(F_{\smax}) = 0$, and $H_1(F_{\min}) = -1$. By Theorem~\ref{thm:main}, we see that $w_G(M) = 2$ and $c_{HZ}(M) = 3$. Now consider the circle action $\phi_2$ given by $(-1,-2)$. Then $F_{\max} = p_{14}$, $F_{\smax} = p_{34}$, and $F_{\min} = p_{23}$. The moment map image is given by $H_2(F_{\max}) = 2$, $H_2(F_{\smax}) = 1$, and $H_2(F_{\min}) = -3$. In this case the action is not semifree; $D_3$ has isotropy weight~$2$.
\end{exmp}
\begin{figure}
\caption{Moment map image of $M = \mathbb{P}^2\#\overline{\mathbb{P}^2}$ in Example~\ref{ex:blowup}.}
\label{fig:moment map image}
\begin{tikzpicture}[scale=1.5]
	\draw[thick] (1,0) -- node[below]{$D_1$} (3,0) -- node[above right]{$D_2$} (0,3)
	-- node[left]{$D_3$} (0,1) -- node[below left]{$D_4$} (1,0);
	\node[left] at (0,1) {$(0,1)$};
	\node[left] at (0,3) {$(0,3)$};
	\node[below] at (1,0) {$(1,0)$};
	\node[below] at (3,0) {$(3,0)$};

	\node[right] at (3,2) {$\phi_1 : (0,1)$};
	\node[right] at (3,1.5) {$\phi_2 : (-1,-2)$};
\end{tikzpicture}
\end{figure}
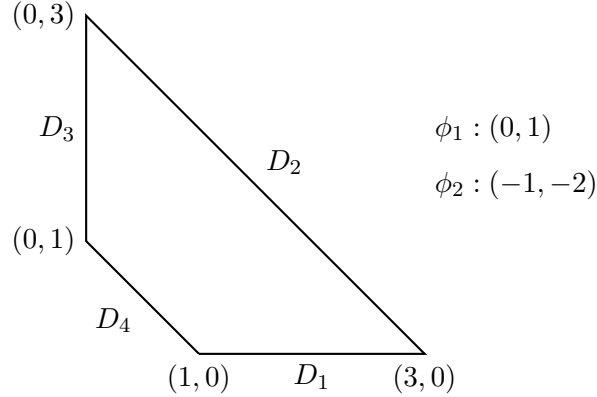

The assumption that $F_{\max}$ is a point is also essential in part~(b) of our  theorem.
\begin{exmp}
Pick a volume form~$\sigma$ on $S^2$ so that $\int_{S^2} \sigma = 1$. Consider the product $(S^2 \times S^2, a\sigma + b\sigma)$ with $a \geq b > 0$. We apply Theorem~\ref{thm:main} using the diagonal circle action to have $w_G(M) = b$ and $c_{HZ}(M) = a+b$.

On the other hand, consider the circle action rotating the first factor. There are exactly two fixed components $F_{\max} = \{N\} \times S^2$ and $F_{\min} = \{S\} \times S^2$ where $N$ and $S$ denote the north and south pole, respectively. The difference $H_{\max} - H_{\min} = H_{\max} - H_{\smax}$ is $a$. We see $b \leq a \leq a+b$, but inequalities are strict unless $a=b$.
\end{exmp}

\end{document}